\newtheorem{theorem}{Theorem}[section]
\newtheorem{lemma}[theorem]{Lemma}
\theoremstyle{definition}
\theoremstyle{remark}
\newtheorem{remark}[theorem]{Remark}
\numberwithin{equation}{section}
\begin{document}

\title{On a Positive decomposition of entropy production functional for the polyatomic BGK model.}

\author{Sa Jun Park, SEOK-BAE YUN }
\address{Department of Mathematics, Sungkyunkwan University, Suwon 440-746, Republic of Korea}

\email{parksajune@skku.edu}
\address{Department of Mathematics, Sungkyunkwan University, Suwon 440-746, Republic of Korea}
\email{sbyun01@skku.edu}
\begin{abstract}
	In this paper, we show that  the entropy production functional for the polyatomic ellipsoidal BGK model can be decomposed into two non-negative parts. Two applications of this property: the $H$-theorem for the polyatomic BGK model and the weak compactness of the polyatomic ellipsoidal relaxation operator, are discussed.
\end{abstract}
\keywords{BGK model, Boltzmann equation, Polyatomic gases, Kinetic theory of gases, Entropy production functional, H-theorem}
\maketitle
\section{Introduction}
In this paper, we obtain a positive decomposition of the entropy production functional of the polyatomic ellipsoidal BGK model, which is a relaxation model of the Boltzmann equation for polyatomic gases \cite{A-L-P-P,A-B-L-P,BGK,BS,Wel}:
\begin{align}\label{PolyESBGK}
\partial_tf+v\cdot\nabla_xf=A_{\nu,\theta}\{\mathcal{M}_{\nu,\theta}(f)-f\}.
\end{align}
The velocity distribution function $f(t,x,v,I)$ denotes the number density of the particles on phase point
$(x,v) \in \Omega_x\times\mathbb{R}^3_v$ at time $t\in\mathbb{R}^+$ with non-translational internal energy $I^{\frac{2}{\delta}}\in\mathbb{R}^+$, where $\delta>0$ is the degree of freedom of non-translational motions of the molecules such as the vibration or rotation.

We define the macroscopic local density $\rho(t,x)$, bulk velocity $U(t,x)$, stress tensor $\Theta(t,x)$ and internal energy $E_{\delta}(t,x)$ by
\begin{align*}
\rho(t,x)&=\int_{\mathbb{R}^3\times\mathbb{R}^+}f(t,x,v,I)dvdI,\cr
U(t,x)&=\frac{1}{\rho}\int_{\mathbb{R}^3\times\mathbb{R}^+}vf(t,x,v,I)dvdI,\cr
\Theta(t,x)&=\frac{1}{\rho}\int_{\mathbb{R}^3\times\mathbb{R}^+}\left\{(v-U(t,x))\otimes(v-U(t,x))\right\}f(t,x,v,I)dvdI, \cr
E_{\delta}(t,x)&=\int_{\mathbb{R}^3\times\mathbb{R}^+}\left\{\frac{1}{2}|v-U(t,x)|^2+I^{\frac{2}{\delta}}\right\}f(t,x,v,I)dvdI.
\end{align*}
Here, the internal energy $E_{\delta}$ is divided into
the energy due to the translational motion, and the energy attributed to the non-translational motion of the particles :
\begin{align*}
E_{tr}&=\int_{\mathbb{R}^3\times\mathbb{R}^+}\frac{1}{2}|v-U(t,x)|^2f(t,x,v,I)dvdI, \cr
E_{I,\delta}&=\int_{\mathbb{R}^3\times\mathbb{R}^+}I^{\frac{2}{\delta}}f(t,x,v,I)dvdI.
\end{align*}
In view of the equi-partition principle, we define the corresponding temperatures $T_{\delta}$, $T_{tr}$ and $T_{I,\delta}$ by:
\begin{align*}
E_{\delta}=\frac{3+\delta}{2}\rho T_{\delta}, \quad E_{tr}=\frac{3}{2}\rho T_{tr}, \quad E_{I,\delta}&=\frac{\delta}{2}\rho T_{I,\delta}.
\end{align*}
The collision frequency $A_{\nu,\theta}$ is defined by $A_{\nu,\theta}=(\rho T_{\delta})/(1-\nu+\nu\theta)$. The relaxation parameters $-1/2<\nu<1$ and $0\leq\theta\leq1$ were introduced so that the
Prandtl number and the second viscosity coefficient are correctly derived through the Chapmann-Enskog expansion. The corrected temperature tensor $\mathcal{T}_{\nu,\theta}$ and the relaxation temperature $T_{\theta}$ are defined by
\begin{align*}
\mathcal{T}_{\nu,\theta}&=(1-\theta)\{(1-\nu)T_{tr}Id+\nu\Theta\}+\theta T_{\delta} Id, \cr
T_{\theta}&=(1-\theta)T_{I,\delta}+\theta T_{\delta}.
\end{align*}
The polyatomic Gaussian $\mathcal{M}_{\nu,\theta}$ is given by
\begin{align*}
\mathcal{M}_{\nu,\theta}(f)=\frac{\rho\Lambda_{\delta}}{\sqrt{\det(2\pi\mathcal{T}_{\nu,\theta})}T_{\theta}^{\frac{\delta}{2}}}\exp\left(-\frac{1}{2}(v-U)^{\top}\mathcal{T}_{\nu,\theta}^{-1}(v-U)-\frac{I^{\frac{2}{\delta}}}{T_{\theta}}\right).
\end{align*}	
Here, $\Lambda_{\delta}$ is the normalizing factor: $\Lambda_{\delta}=1/\int_{\mathbb{R}^+}e^{-I^{\frac{2}{\delta}}}dI$.
This relaxation operator satisfies
\begin{align*}
\int_{\mathbb{R}^3\times\mathbb{R}^+}\{\mathcal{M}_{\nu,\theta}(f)(t,x,v,I)-f(t,x,v,I)\}
\left(\begin{array}{ccc}
1 \cr v \cr \frac{1}{2}|v|^2+I^{\frac{2}{\delta}}
\end{array}\right) dvdI=0,
\end{align*}
which implies the conservation of mass, momentum and energy:
The $H$-theorem for this model was verified first by Andries et al in \cite{A-L-P-P} (See also \cite{BS,PY}):
\begin{align*}
\frac{d}{dt}\int_{\mathbb{R}^3\times\mathbb{R}^+}f(t)\ln f(t)dvdI \leq 0.
\end{align*}
For many kinetic equations, such as the Boltzmann equation or the original BGK model, the non-negativity of the entropy production functional is the result of the following elementary inequality:
\begin{align}\label{WK}
\mathcal{E}(A,B)=(A-B)(\ln A-\ln B)\geq0.
\end{align}
Entropy production functional $D_{\nu,\theta}(f)$ for our model, however, has additional term besides this:
\begin{align}\label{Dtheta}
\begin{split}
D_{\nu,\theta}(f)&=
-\int_{\mathbb{R}^3 \times \mathbb{R}^+}\{\mathcal{M}_{\nu,\theta}(f)-f\}\ln fdvdI \cr
&=\int_{\mathbb{R}^3 \times \mathbb{R}^+}\{\mathcal{M}_{\nu,\theta}(f)-f\}\{\ln\mathcal{M}_{\nu,\theta}(f)-\ln f\}dvdI +R_{\nu,\theta}
\end{split}
\end{align}
where the remainder term $R_{\nu,\theta}$ is defined by
\begin{align*}
R_{\nu,\theta}=\int_{\mathbb{R}^3 \times \mathbb{R}^+}\{\mathcal{M}_{\nu,\theta}(f)-f\}\left(\frac{1}{2}(v-U)^{\top}\mathcal{T}_{\nu,\theta}^{-1}(v-U)+\frac{I^{\frac{2}{\delta}}}{T_{\theta}}\right)dvdI.
\end{align*}
The first term of the second line in (\ref{Dtheta}) is clearly non-negative due to (\ref{WK}). The following theorem says that the second term is also non-negative if $\nu$ is non-negative:
\begin{theorem}\label{theorem} Let $0\leq \theta \leq 1$ and $0\leq\nu<1$. Then the remainder term $R_{\nu,\theta}$ is non-negative:
\begin{align*}
R_{\nu,\theta}\geq0.
\end{align*}
\end{theorem}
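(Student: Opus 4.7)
The plan is to compute $R_{\nu,\theta}$ explicitly and then reduce the resulting inequality to a multivariate Jensen inequality for the convex function $1/x$.

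First I would carry out the two integrals that make up $R_{\nu,\theta}$. Since $\mathcal{M}_{\nu,\theta}(f)$ is the product of a Gaussian in $v-U$ with covariance $\mathcal{T}_{\nu,\theta}$ and a ``$\delta$-thermal'' factor in $I^{2/\delta}$ with parameter $T_\theta$, the standard moment identities give
\begin{align*}
\int \mathcal{M}_{\nu,\theta}(f)\Big(\tfrac{1}{2}(v-U)^{\top}\mathcal{T}_{\nu,\theta}^{-1}(v-U)+\tfrac{I^{2/\delta}}{T_{\theta}}\Big)\,dvdI=\tfrac{3}{2}\rho+\tfrac{\delta}{2}\rho=\tfrac{3+\delta}{2}\rho.
\end{align*}
The corresponding integrals against $f$ produce $\tfrac{\rho}{2}\operatorname{tr}(\mathcal{T}_{\nu,\theta}^{-1}\Theta)$ (from the definition of the stress tensor) and $\tfrac{\delta\rho T_{I,\delta}}{2T_\theta}$ (from the definition of $E_{I,\delta}$). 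Subtracting gives
\begin{align*}
R_{\nu,\theta}=\tfrac{\rho}{2}\Big[(3+\delta)-\operatorname{tr}(\mathcal{T}_{\nu,\theta}^{-1}\Theta)-\tfrac{\delta T_{I,\delta}}{T_\theta}\Big],
\end{align*}
so the claim reduces to the scalar inequality $\operatorname{tr}(\mathcal{T}_{\nu,\theta}^{-1}\Theta)+\tfrac{\delta T_{I,\delta}}{T_\theta}\le 3+\delta$.

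Second, I would diagonalize $\Theta$. Since $\Theta$ is symmetric and positive definite, an orthonormal change of basis gives $\Theta=\operatorname{diag}(\lambda_1,\lambda_2,\lambda_3)$ with $\lambda_i>0$ and $\lambda_1+\lambda_2+\lambda_3=3T_{tr}$; in the same basis $\mathcal{T}_{\nu,\theta}=\operatorname{diag}(\mu_1,\mu_2,\mu_3)$ with
\begin{align*}
\mu_i=(1-\theta)(1-\nu)T_{tr}+(1-\theta)\nu\lambda_i+\theta T_\delta.
\end{align*}
The key observation, which is where the hypothesis $0\le\nu<1$ enters decisively, is that the three coefficients $(1-\theta)(1-\nu)$, $(1-\theta)\nu$, $\theta$ are non-negative and sum to $1$; i.e., $\mu_i$ is a convex combination of $T_{tr}$, $\lambda_i$, $T_\delta$. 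Likewise $T_\theta$ is a convex combination of $T_{I,\delta}$ and $T_\delta$.

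Third, I would apply Jensen's inequality to the convex function $x\mapsto 1/x$ on $(0,\infty)$ to obtain
\begin{align*}
\frac{\lambda_i}{\mu_i}\le (1-\theta)(1-\nu)\frac{\lambda_i}{T_{tr}}+(1-\theta)\nu+\theta\frac{\lambda_i}{T_\delta},\qquad \frac{T_{I,\delta}}{T_\theta}\le (1-\theta)+\theta\frac{T_{I,\delta}}{T_\delta}.
\end{align*}
Summing the first bound over $i=1,2,3$ (using $\sum_i\lambda_i=3T_{tr}$) and adding $\delta$ times the second yields
\begin{align*}
\operatorname{tr}(\mathcal{T}_{\nu,\theta}^{-1}\Theta)+\tfrac{\delta T_{I,\delta}}{T_\theta}\le(3+\delta)(1-\theta)+\theta\cdot\frac{3T_{tr}+\delta T_{I,\delta}}{T_\delta},
\end{align*}
and the defining identity $(3+\delta)T_\delta=3T_{tr}+\delta T_{I,\delta}$ collapses the right-hand side exactly to $3+\delta$, as desired.

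The calculation of the Gaussian moments and the diagonalization are routine; the genuine content sits in the second paragraph, and the main obstacle is simply recognizing that the hypotheses $0\le\nu<1$ and $0\le\theta\le 1$ are precisely what is needed to make both $\mu_i$ and $T_\theta$ convex combinations of the ``constituent'' temperatures. Without $\nu\ge 0$ the coefficient $(1-\theta)\nu$ becomes negative and the Jensen step fails, which is consistent with the known sharpness of this range; the cancellation through $(3+\delta)T_\delta=3T_{tr}+\delta T_{I,\delta}$ is also delicate and deserves emphasis.
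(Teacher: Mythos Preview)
Your argument is correct, and it follows a genuinely different (and shorter) route than the paper's.

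Both proofs begin identically: compute the moments to obtain
\[
R_{\nu,\theta}=\frac{\rho}{2}\Big[(3+\delta)-\sum_{i=1}^{3}\frac{\lambda_i}{\mu_i}-\frac{\delta T_{I,\delta}}{T_\theta}\Big],
\]
after simultaneously diagonalizing $\Theta$ and $\mathcal{T}_{\nu,\theta}$. From here the paper proceeds in two stages: it first bounds $\sum_i \lambda_i/\mu_i$ by writing $\lambda_i$ as an affine function of $\mu_i$ and applying the harmonic--arithmetic mean inequality to $\sum_i 1/\mu_i$, obtaining $\sum_i \lambda_i/\mu_i\le \frac{3T_{tr}}{(1-\theta)T_{tr}+\theta T_\delta}$; it then invokes a separate convexity lemma (showing $t\mapsto \frac{3A}{(1-t)A+tK}+\frac{\delta B}{(1-t)B+tK}$ is convex with equal endpoint values) to combine this with the $T_{I,\delta}/T_\theta$ term. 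The paper also treats $\nu=0$ and $\theta=0$ as separate cases.

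You instead observe that, precisely when $0\le\nu<1$ and $0\le\theta\le1$, each $\mu_i$ is a convex combination of $T_{tr},\lambda_i,T_\delta$ and $T_\theta$ is a convex combination of $T_{I,\delta},T_\delta$; one application of Jensen for $x\mapsto 1/x$ to each of the four terms, followed by the energy identity $(3+\delta)T_\delta=3T_{tr}+\delta T_{I,\delta}$, finishes everything at once. This is more elementary, avoids the auxiliary convexity lemma, handles all parameter values uniformly, and makes the role of the hypothesis $\nu\ge0$ completely transparent. The paper's route, while longer, does isolate the intermediate estimate $\sum_i\lambda_i/\mu_i\le \frac{3T_{tr}}{(1-\theta)T_{tr}+\theta T_\delta}$, which may be of independent use.
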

\begin{remark}
\noindent (1) This positive decomposition of the entropy production functional can be used to prove the $H$-theorem, and the $L^1$ compactness of the polyatomic Gaussian. (See Section 4.)\newline
\noindent (2) For the monatomic ellipsoidal BGK model, a rather complete result is available \cite{Yun2}: The remainder term is positive when $\nu>0$, zero when $\nu=0$, and negative if $-1/2<\nu<0$ . For the polyatomic model, the last case  is inconclusive . We leave it to future work.\newline
(3) This result is a priori estimate. For this to be rigorous, we should start from existence theory. The existence of solutions for which all the above integral and computations make sense was obtained recently in \cite{Yun}.
\end{remark}

This paper is organized as follows.
In Section 2, several useful lemmas are established. The proof of the main theorem is given in Section 3. In Section 4, two applications of this result: the $H$-theorem and the weak compactness of polyatomic Gaussian, are discussed.
\section{Lemmas}
In this section, we establish lemmas which are crucially used in the proof later. We start with a reformulation of the remainder term.
\begin{lemma}\label{rewritten}
We can write $R_{\nu,\theta}$ as
\begin{align}\label{R}
R_{\nu,\theta}=\frac{\rho}{2}\left\{3+\delta-\left(F_{\theta}+\frac{\delta T_{I,\delta}}{T_{\theta}}\right)\right\},
\end{align}
where $F_{\theta}$ is given by
\begin{align*}
F_{\theta}
&=\sum^3_{i=1}\frac{\Theta_i}{A_i}
\end{align*}
for  the eigenvalues $\Theta_i~(i=1,2,3)$ of $\Theta$ and $A_i=(1-\theta)\{(1-\nu)T_{tr}+\nu\Theta_i\}+\theta T_{\delta }$.
\end{lemma}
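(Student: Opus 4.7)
The plan is to split $R_{\nu,\theta}$ into four pieces according to the two densities involved ($\mathcal{M}_{\nu,\theta}(f)$ and $f$) tested against the two observables (the translational quadratic form $\frac{1}{2}(v-U)^{\top}\mathcal{T}_{\nu,\theta}^{-1}(v-U)$ and the internal-energy contribution $I^{2/\delta}/T_{\theta}$), and then evaluate each integral separately. The claimed identity should then fall out by bookkeeping.

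First I would tackle the two integrals against $f$. The key observation is that, by its very definition, $\mathcal{T}_{\nu,\theta} = (1-\theta)(1-\nu)T_{tr}Id + (1-\theta)\nu\Theta + \theta T_{\delta}Id$ is a scalar matrix plus a scalar multiple of the symmetric stress tensor $\Theta$, so it is diagonal in any orthonormal basis that diagonalizes $\Theta$, with eigenvalues precisely $A_1,A_2,A_3$. Writing the quadratic form in this basis as $\sum_{i}(v-U)_i^{2}/A_i$ and using the definition of $\Theta$ (which gives $\int (v-U)_i^{2}f\,dvdI=\rho\Theta_i$), the first piece contributes $\frac{\rho}{2}F_{\theta}$. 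The internal-energy piece is immediate from the definitions of $E_{I,\delta}$ and $T_{I,\delta}$:
\begin{align*}
\int_{\mathbb{R}^3\times\mathbb{R}^+}\frac{I^{2/\delta}}{T_{\theta}}\,f\,dvdI = \frac{E_{I,\delta}}{T_{\theta}} = \frac{\delta\rho T_{I,\delta}}{2T_{\theta}}.
\end{align*}

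Next I would handle the integrals against $\mathcal{M}_{\nu,\theta}(f)$. The Gaussian factors as (marginal in $v$)$\times$(marginal in $I$): a $3$-dimensional Gaussian with mean $U$ and covariance $\mathcal{T}_{\nu,\theta}$ times a normalized density in $I$, each scaled so the total mass equals $\rho$. The standard second moment of the Gaussian therefore gives
\begin{align*}
\int_{\mathbb{R}^3\times\mathbb{R}^+}\tfrac{1}{2}(v-U)^{\top}\mathcal{T}_{\nu,\theta}^{-1}(v-U)\,\mathcal{M}_{\nu,\theta}(f)\,dvdI = \tfrac{\rho}{2}\operatorname{tr}(\mathcal{T}_{\nu,\theta}^{-1}\mathcal{T}_{\nu,\theta})=\tfrac{3\rho}{2}.
\end{align*}
For the $I$-moment I would change variables $u=I^{2/\delta}/T_{\theta}$ to reduce the integrand to a Gamma integral, and then match the $\Gamma(\delta/2+1)=(\delta/2)\Gamma(\delta/2)$ appearing in the numerator against the $\Gamma(\delta/2)$ hidden in $\Lambda_{\delta}=(\int e^{-I^{2/\delta}}dI)^{-1}$; the surviving constants yield $\delta\rho/2$. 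Adding the two $\mathcal{M}_{\nu,\theta}$ contributions gives $\frac{\rho}{2}(3+\delta)$, and subtracting the two $f$ contributions produces exactly \eqref{R}.

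The main obstacle is really just a careful accounting step: the simultaneous diagonalization of $\mathcal{T}_{\nu,\theta}$ and $\Theta$, which is what allows the answer to be expressed in terms of the scalar eigenvalues $\Theta_i$ and $A_i$ rather than as a trace of a product of matrices. The only other subtlety is the $I$-integral for the Gaussian, where the $\Lambda_{\delta}$ and $T_{\theta}^{\delta/2}$ factors in $\mathcal{M}_{\nu,\theta}(f)$ must be tracked to see the Gamma functions cancel cleanly.
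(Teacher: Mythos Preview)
Your proposal is correct and follows essentially the same route as the paper: both arguments hinge on the simultaneous diagonalization of $\Theta$ and $\mathcal{T}_{\nu,\theta}$ and on computing the second moments of $f$ and of the Gaussian $\mathcal{M}_{\nu,\theta}(f)$. The only cosmetic difference is that the paper keeps $\mathcal{M}_{\nu,\theta}(f)-f$ together and expresses the translational part as $\frac{\rho}{2}\{\mathcal{T}_{\nu,\theta}-\Theta\}:\mathcal{T}_{\nu,\theta}^{-1}=\frac{\rho}{2}\{3-\operatorname{tr}(\Theta^{\top}\mathcal{T}_{\nu,\theta}^{-1})\}$ before diagonalizing, whereas you split into four pieces and pass to the eigenbasis immediately.
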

\begin{proof}
Using the identity:
\[
X^{\top}\{Y\}X=\{X\otimes X\}:Y,\qquad( X\in \mathbb{R}^n, Y\in\mathbb{R}^{n\times n})
\]
where $X:Y$ denotes the Frobenius product:
$A:B=\sum_{i,j}A_{ij}B_{ij}$,
we have
\begin{align*}
\begin{split}
R_{\nu,\theta} &=\frac{1}{2}\left\{\int_{\mathbb{R}^3 \times \mathbb{R}^+}\{\mathcal{M}_{\nu,\theta}(f)-f \}(v-U)\otimes (v-U)dvdI \right\}:\mathcal{T}_{\nu,\theta}^{-1} +\frac{\rho}{2}\frac{\delta(T_{\theta}-T_{I,\delta})}{T_{\theta}} \cr
&=\frac{\rho}{2}\{\mathcal{T}_{\nu,\theta}-\Theta\} :\mathcal{T}_{\nu,\theta}^{-1}+\frac{\rho}{2}\frac{\delta(T_{\theta}-T_{I,\delta})}{T_{\theta}}.		
\end{split}
\end{align*}
Then, recalling the identity
\[
A:B=tr\big(A^{\top}B\big),
\]
we compute
\begin{align}\label{R}
\begin{split}
R_{\nu,\theta}=\frac{\rho}{2}\{3-tr\big(\Theta^{\top}\mathcal{T}_{\nu,\theta}^{-1}\big) \}+\frac{\rho}{2}\frac{\delta(T_{\theta}-T_{I,\delta})}{T_{\theta}}
=\frac{\rho}{2}\left\{3+\delta-\left(tr\big(\Theta^{\top}\mathcal{T}_{\nu,\theta}^{-1}\big) +\frac{\delta T_{I,\delta}}{T_{\theta}} \right) \right\}.
\end{split}
\end{align}
Since $\Theta$ is symmetric, there exists an orthogonal matrix $P$ such that
\begin{align}\label{Diagtheta}
P^{\top}\Theta P=diag\{\Theta_1,\Theta_2,\Theta_3\}.
\end{align}
Here, $diag\{a,b,\cdots\}$ denotes a diagonal matrix with diagonal entries $a,b,\cdots$.
Using the same $P$, we can also diagonalize $\mathcal{T}_{\nu,\theta}$ as
\begin{align*}
P^{\top}\mathcal{T}_{\nu,\theta}P
&=
P^{\top}[\,(1-\theta)\{(1-\nu)T_{tr}Id+\nu\Theta\}+\theta T_{\delta}Id\,]P \cr
&=(1-\theta)\{(1-\nu)T_{tr}Id+\nu P^{\top}\Theta P\}+\theta T_{\delta}Id\cr
&=(1-\theta)\big[(1-\nu)T_{tr}Id+\nu diag\{\Theta_1,\Theta_2,\Theta_3\}\big]+\theta T_{\delta}Id\cr
&=diag\big\{
A_1,A_2,A_3
\big\}.
\end{align*}
Therefore, we have
\begin{align*}
P^{\top}\mathcal{T}_{\nu,\theta}^{-1}P=\{P^{\top}\mathcal{T}_{\nu,\theta}P\}^{-1}
=diag\big\{A^{-1}_1, A^{-1}_2, A^{-1}_3\big\}
\end{align*}
so that, from the similarity invariance of the trace operator, we get
\begin{align*}
tr\big(\Theta^{\top}\mathcal{T}_{\nu,\theta}^{-1}\big)
= tr\big(P^{\top}\Theta^{\top}\mathcal{T}_{\nu,\theta}^{-1}P\big)
= tr\big(\{P^{\top}\Theta^{\top}P\}\{P^{\top}\mathcal{T}_{\nu,\theta}^{-1}P\}\big)
	=\sum_{i=1,2,3}\frac{\Theta_i}{A_i}.
\end{align*}
\end{proof}
The above lemma shows that a proper estimate of $F_{\theta}$ is important, which is given in the following lemma.
\begin{lemma}\label{F theta} Let $0\leq\nu<1$ and $0\leq \theta\leq 1$. Then we have
\begin{align*}
F_{\theta}\leq \frac{3T_{tr}}{(1-\theta)T_{tr}+\theta T_{\delta }}.
\end{align*}
\end{lemma}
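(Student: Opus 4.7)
The plan is to reduce the inequality to a one-variable concavity (Jensen) argument applied to the three eigenvalues $\Theta_1,\Theta_2,\Theta_3$ of $\Theta$. The key algebraic observation is that $A_i$ is affine in $\Theta_i$: writing out the definition gives
\[
A_i = \bigl[(1-\theta)(1-\nu)T_{tr}+\theta T_{\delta}\bigr] + (1-\theta)\nu\,\Theta_i = a + b\,\Theta_i,
\]
with $a:= (1-\theta)(1-\nu)T_{tr}+\theta T_{\delta} > 0$ (using $0\le \theta \le 1$ and $0\le \nu < 1$) and $b:= (1-\theta)\nu \ge 0$. I would then note that $a + b\,T_{tr} = (1-\theta)T_{tr}+\theta T_{\delta}$, so the right-hand side of the claim is exactly $3T_{tr}/(a+bT_{tr})$.

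Next I would record two identities involving $T_{tr}$: from the definitions $E_{tr}=(3/2)\rho T_{tr}$ and $\Theta = \rho^{-1}\!\int (v-U)\otimes(v-U)f\,dv\,dI$ one has $\mathrm{tr}\,\Theta = 3T_{tr}$, hence the arithmetic mean of the eigenvalues satisfies
\[
\frac{\Theta_1+\Theta_2+\Theta_3}{3}=T_{tr}.
\]
Also, since $\Theta$ is a (positive semi-definite) covariance matrix, $\Theta_i\ge 0$, so every $A_i > 0$ and $f(x):=x/(a+bx)$ is well-defined at $x=\Theta_i$.

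The core step is the concavity of $f$ on $(0,\infty)$. I would rewrite $f(x) = b^{-1}\bigl(1 - a/(a+bx)\bigr)$ when $b>0$, from which $f''(x) = -2ab/(a+bx)^3 < 0$ is immediate (the case $b=0$ giving $f$ linear and the inequality trivial). Applying Jensen's inequality to the three points $\Theta_1,\Theta_2,\Theta_3$ with equal weights $1/3$ gives
\[
\frac{1}{3}\sum_{i=1}^{3}\frac{\Theta_i}{A_i} = \frac{1}{3}\sum_{i=1}^{3}f(\Theta_i) \le f\!\left(\frac{\Theta_1+\Theta_2+\Theta_3}{3}\right) = f(T_{tr}) = \frac{T_{tr}}{(1-\theta)T_{tr}+\theta T_{\delta}},
\]
and multiplying by $3$ yields the claim.

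I do not anticipate a substantive obstacle: once one spots the affine dependence $A_i = a+b\Theta_i$ (so that the sum $F_\theta$ is a symmetric function only of the $\Theta_i$'s) the problem collapses to a routine one-variable concavity check. The only mildly delicate point is to make sure the hypothesis $0\le \nu < 1$ is used precisely where it is needed, namely in the positivity of $a$ (which guarantees concavity and well-posedness of $f$ on the relevant domain); the opposite sign $\nu<0$ would reverse the concavity of $f$ and is what ultimately obstructs the analogous statement in that regime.
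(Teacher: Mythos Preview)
Your proof is correct and follows essentially the same route as the paper's: both reduce to Jensen's inequality using $\Theta_1+\Theta_2+\Theta_3=3T_{tr}$. The only cosmetic difference is that you apply Jensen directly to the concave map $x\mapsto x/(a+bx)$, whereas the paper first rewrites $F_\theta=\frac{3}{(1-\theta)\nu}-\frac{a}{(1-\theta)\nu}\sum_i A_i^{-1}$ and then applies Jensen to the convex map $y\mapsto 1/y$ (forcing a separate treatment of $\nu=0$); your packaging handles $b=0$ uniformly and so avoids that case split.
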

\begin{proof}
{\bf Case 1: ($0<\nu<1$)}
Recall the definition of $A_i$ to see that we can write
\begin{align}\label{defABC}
\Theta_i=\{(1-\theta)\nu\}^{-1}\big\{A_i-(1-\theta)(1-\nu)T_{tr}-\theta T_{\delta}\big\}, \quad(i=1,2,3).
\end{align}
Then, $F_{\theta}$, defined in Lemma \ref{rewritten}, is rewritten as follows:
\begin{align}\label{F}
\begin{split}
F_{\theta}&=\sum_{i=1}^3\frac{\Theta_i}{A_i} \cr
&=\sum_{i=1}^3\frac{\{(1-\theta)\nu\}^{-1}\big\{A_i-(1-\theta)(1-\nu)T_{tr}-\theta T_{\delta}\big\}}{A_i}\cr
&=\frac{3}{(1-\theta)\nu}-\left(\frac{(1-\theta)(1-\nu)T_{tr}+\theta T_{\delta}}{(1-\theta)\nu}\right)\left(\frac{1}{A_1}+\frac{1}{A_2}+\frac{1}{A_3}\right).
\end{split}
\end{align}
Now, by Jensen's inequality,
\begin{align*}
\frac{1}{A_1}+\frac{1}{A_2}+\frac{1}{A_3} &\geq \frac{3}{\frac{A_1+A_2+A_3}{3}}\cr &=
\frac{9}{(1-\theta)\{(1-\nu)3T_{tr}+\nu(\Theta_1+\Theta_2+\Theta_3)\}+3\theta T_{\delta}} \cr
&=
\frac{3}{(1-\theta)T_{tr}+\theta T_{\delta}},
\end{align*}
where we used $\Theta_1+\Theta_2+\Theta_3=3T_{tr}$.  Therefore, we have for $\nu>0$,
\begin{align*}
F_{\theta}
&\leq \frac{3}{(1-\theta)\nu}-\left(\frac{(1-\theta)(1-\nu)T_{tr}+\theta T_{\delta} }{(1-\theta)\nu} \right)\frac{3}{(1-\theta)T_{tr}+\theta T_{\delta}} \cr
&= \frac{3}{(1-\theta)\nu}-\left(\frac{(1-\theta)T_{tr}+\theta T_{\delta} }{(1-\theta)\nu}-\frac{(1-\theta)\nu T_{tr}}{(1-\theta)\nu} \right)\frac{3}{(1-\theta)T_{tr}+\theta T_{\delta}}\cr
&=
\frac{3}{(1-\theta)\nu}-\frac{3}{(1-\theta)\nu}+\frac{3T_{tr}}{(1-\theta)T_{tr}+\theta T_{\delta}} \cr
&=\frac{3T_{tr}}{(1-\theta)T_{tr}+\theta T_{\delta}}.
\end{align*}
{\bf Case 2: ($\nu=0$)} In this case, the $\Theta_i$ in the denominator vanishes and the computation is simplified a lot:
\[
F_{\theta}=\frac{\Theta_1+\Theta_2+\Theta_3}{(1-\theta)T_{tr}+\theta T_{\delta}}
=\frac{3T_{tr}}{(1-\theta)T_{tr}+\theta T_{\delta}}.
\]
\end{proof}
The following convexity property plays an important role in the proof of main theorem.
\begin{lemma}\label{convex} Let $A,B\geq 0$. Define
\[
K=\frac{3}{3+\delta}A+\frac{\delta }{3+\delta}B.
\]
Then, for $0\leq t\leq 1$, we have
\begin{align*}
\frac{3A}{(1-t)A+tK}+\frac{\delta B}{(1-t)B+tK} \leq 3+\delta.
\end{align*}
\end{lemma}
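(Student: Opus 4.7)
The plan is to set
\[
\phi(t) = \frac{3A}{(1-t)A + tK} + \frac{\delta B}{(1-t)B + tK}, \qquad t\in[0,1],
\]
and exploit the observation that $K$ has been chosen precisely so that $\phi$ takes the value $3+\delta$ at both endpoints of $[0,1]$. Once the boundary values are pinned down, the inequality on the interior would follow from the convexity of $\phi$ as a one-variable function of $t$.

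Step~1, boundary values: at $t=0$ the two summands simplify to $3$ and $\delta$, so $\phi(0)=3+\delta$. At $t=1$ both denominators collapse to $K$, so
\[
\phi(1) = \frac{3A+\delta B}{K} = 3+\delta,
\]
since by definition $K=(3A+\delta B)/(3+\delta)$.

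Step~2, convexity: each summand has the form $t\mapsto c/(\alpha+\beta t)$ on an interval where $\alpha+\beta t>0$, and its second derivative is $2\beta^{2}c/(\alpha+\beta t)^{3}\geq 0$ whenever $c\geq 0$. Provided $A,B>0$ (so that $K>0$ and both denominators remain positive on $[0,1]$), both summands are convex on $[0,1]$, hence so is $\phi$. Combined with Step~1 this gives, for every $t\in[0,1]$,
\[
\phi(t) \leq (1-t)\phi(0) + t\phi(1) = 3+\delta,
\]
which is exactly the claimed inequality.

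The main obstacle is conceptual rather than technical: one must resist attacking the rational inequality head-on by clearing denominators, and instead notice the underlying one-variable convexity together with the fact that the weights defining $K$ are calibrated to force equality at both endpoints. The degenerate cases $A=0$ or $B=0$ are handled either by a routine limiting argument in $(A,B)$ or by observing that the offending summand vanishes identically in $t$, reducing $\phi$ to a single fraction that is monotone in $t$ on $[0,1]$ and attains $3+\delta$ at $t=1$.
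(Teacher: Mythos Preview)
Your proof is correct and follows essentially the same route as the paper: identify the boundary values $\phi(0)=\phi(1)=3+\delta$ and then show $\phi$ is convex on $[0,1]$. The only cosmetic difference is that the paper verifies convexity by computing $\phi''$ explicitly, whereas you invoke the general fact that $t\mapsto c/(\alpha+\beta t)$ is convex when $c\ge 0$ and the denominator is positive; you also address the degenerate cases $A=0$ or $B=0$, which the paper leaves implicit.
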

\begin{proof}
Set
\[
F(t)=\frac{3A}{(1-t)A+tK}+\frac{\delta B}{(1-t)B+tK}.
\]
Then, it is easy to see that  $F(0)=F(1)= 3+\delta$.
Therefore, the desired result follows once it is verified that $F(t)$ is convex  on $t\in[0,1]$. First, we write
\begin{align*}
F(t)
&=\frac{3A}{(1-t)A+t\left(\frac{3}{3+\delta}A+\frac{\delta}{3+\delta}B\right)}+\frac{\delta B}{(1-t)B+t\left(\frac{3}{3+\delta}A+\frac{\delta}{3+\delta}B \right)} \cr
&=\frac{3(3+\delta)A}{(3+\delta)A-\delta t(A-B)}+\frac{\delta(3+\delta)B}{(3+\delta)B+3t(A-B)} \cr
&=(3+\delta)\bigg\{ \frac{3A}{(3+\delta)A-\delta t(A-B)}+\frac{\delta B}{(3+\delta)B+3t(A-B)}\bigg\}.
\end{align*}
Then, a straightforward computation gives
\begin{align*}
\frac{1}{3+\delta}\frac{d}{dt}F(t)= \frac{3\delta A(A-B)}{[(3+\delta)A-\delta t(A-B)]^2}-\frac{3\delta B(A-B)}{[(3+\delta)B+3t(A-B)]^2}.
\end{align*}
Likewise,
\begin{align*}
\frac{1}{3+\delta}\frac{d^2}{dt^2}F(t)&= \frac{d}{dt}\left[\frac{3\delta A(A-B)}{\big\{(3+\delta)A-\delta t(A-B)\big\}^2}-\frac{3\delta B(A-B)}{\big\{(3+\delta)B+3t(A-B)\big\}^2}\right] \cr
&=\frac{3\delta A(A-B)2[(3+\delta)A-\delta t(A-B)]\delta(A-B)}{[(3+\delta)A-\delta t(A-B)]^4}\cr &+\frac{3\delta B(A-B)2\big\{(3+\delta)B+3t(A-B)\big\}3(A-B) }{\big\{(3+\delta)B+3t(A-B)\big\}^4} \cr
&=\frac{3\delta^2A(A-B)^2}{\big\{(3+\delta)A-\delta t(A-B)\big\}^3}
+\frac{18\delta B(A-B)^2}{\big\{(3+\delta)B+3t(A-B)\big\}^3} \cr
&=\frac{3\delta^2A(A-B)^2}{\big\{3A+\delta tB+\delta(1-t)A \big\}^3}
+\frac{18\delta B(A-B)^2}{\big\{\delta B+3tA+3(1-t)B \big\}^3}\cr
&\geq0,
\end{align*}
for $t\in [0,1]$. This completes the proof.
\end{proof}

\section{Proof of main theorem}
Now we are ready to prove the main theorem. In view of Lemma \ref{rewritten},
our goal reduces to proving
\begin{align}\label{following}
F_{\theta}+\frac{\delta T_{I,\delta}}{T_\theta}\leq3+\delta.
\end{align}

We divide the proof into the following two cases.\newline

\noindent{\bf(1) The case $0<\theta\leq1$:} Recalling Lemma \ref{F theta} and the definition of $T_{\theta}$, we see that
\begin{align*}
F_{\theta}+\frac{\delta T_{I,\delta}}{T_\theta}
\leq\frac{3T_{tr}}{(1-\theta)T_{tr}+\theta T_{\delta}}+\frac{\delta T_{I,\delta}}{(1-\theta)T_{I,\delta}+\theta T_{\delta}}.
\end{align*}
We then apply the result of Lemma \ref{convex} to bound this by $3+\delta$:
\[
F_{\theta}+\frac{\delta T_{I,\delta}}{T_\theta}\leq 3+\delta.
\]
Now the conclusion follows directly from this and Lemma \ref{rewritten}:
\begin{align*}
R_{\nu,\theta}=\frac{\rho}{2}\left\{3+\delta-\left(F_{\theta}+\frac{\delta T_{I,\delta}}{T_{\theta}} \right) \right\}
\geq \frac{\rho}{2}\left\{3+\delta-(3+\delta) \right\}
\geq 0.
\end{align*}

\noindent {\bf(2) The case of $\theta=0$:} In this case, $R_{\nu,0}$ reduces to the remainder term in the entropy dissipation of the monatomic ellipsoidal BGK model:
\section{Applications}
The positive decomposition of $D_{\nu,\theta}(f)$ can be used to derive  the $H$-theorem, and the weak compactness of Polyatomic Gaussian $\mathcal{M}_{\nu,\theta}(f^n)$.
\subsection{$H$-theorem}
By Theorem 1.1, (\ref{WK}) and (\ref{Dtheta}), we see that
\begin{align*}
\frac{d}{dt}\int_{\mathbb{R}^3\times\mathbb{R}^+}f\ln f dvdI
&=-\int_{\mathbb{R}^3 \times \mathbb{R}^+}\{\mathcal{M}_{\nu,\theta}(f)-f\}(\ln\mathcal{M}_{\nu,\theta}(f)-\ln f)dvdI -R_{\nu,\theta} \cr
&\leq -\int_{\mathbb{R}^3 \times \mathbb{R}^+}\{\mathcal{M}_{\nu,\theta}(f)-f\}(\ln\mathcal{M}_{\nu,\theta}(f)-\ln f)dvdI\leq0,
\end{align*}
which gives the desired result.
\subsection{Weak compactness of $\mathcal{M}_{\nu,\theta}(f^n)$}
Suppose that the sequence $\{f^n\}$ is weakly compact in $L^1$. Then for $M>0$, we deduce
\begin{align*}
\mathcal{M}_{\nu,\theta}(f^n)-f^n
&=
\{\mathcal{M}_{\nu,\theta}(f^n)-f^n\}(1_{\mathcal{M}_{\nu,\theta}(f^n)< Mf^n}+1_{\mathcal{M}_{\nu,\theta}(f^n)\geq Mf^n}) \cr
&\leq
(M-1)f^n1_{\mathcal{M}_{\nu,\theta}(f^n)< Mf^n} \cr
&+
\frac{1}{\ln M}(\mathcal{M}_{\nu,\theta}(f^n)-f^n)(\ln\mathcal{M}_{\nu,\theta}(f^n)-\ln f^n)1_{\mathcal{M}_{\nu,\theta}(f^n)\geq Mf^n}.
\end{align*}
Hence we have
\begin{align*}
\mathcal{M}_{\nu,\theta}(f^n) \leq Mf^n+\frac{1}{\ln M}(\mathcal{M}_{\nu,\theta}(f^n)-f^n)(\ln\mathcal{M}_{\nu,\theta}(f^n)-\ln f^n).
\end{align*}
Now, take a measurable set $B\subset \mathbb{T}^3_x\times\mathbb{R}^3_v\times\mathbb{R}^+_I$ and  $T\in\mathbb{R}^+$, and use the
non-negativity of $R_{\nu,\theta}$ to compute
\begin{align*}
&\int_{0}^{T}\int_{B}\mathcal{M}_{\nu,\theta}(f^n)dxdvdIdt \cr
&\leq
M\int_{0}^{T}\int_{B}f^ndxdvdIdt
+
\frac{1}{\ln M}\int_{0}^{T}\int_{\mathbb{T}^3_x\times\mathbb{R}^3_v\times\mathbb{R}^+}(\mathcal{M}_{\nu,\theta}(f^n)-f^n)(\ln\mathcal{M}_{\nu,\theta}(f^n)-\ln f^n)dxdvdIdt \cr
&=
M\int_{0}^{T}\int_{B}f^ndxdvdIdt +
\frac{1}{\ln M}\int_{0}^{T}\int_{\mathbb{T}^3_x}D_{\nu,\theta}(f^n)dxdt-\frac{1}{\ln M}\int_{0}^{T}\int_{\mathbb{T}^3_x}R_{\nu,\theta}dxdt \cr
&\leq
M\int_{0}^{T}\int_{B}f^ndxdvdIdt +
\frac{1}{\ln M}\int_{0}^{T}\int_{\mathbb{T}^3_x}D_{\nu,\theta}(f^n)dxdt,
\end{align*}
which gives
\begin{align*}
\int_{0}^{T}\int_{B}\mathcal{M}_{\nu,\theta}(f^n)dxdvdIdt
&\leq
M\int_{0}^{T}\int_{B}f^ndxdvdIdt
\cr
&+
\frac{1}{\ln M}\left(\int_{\mathbb{T}^3_x\times\mathbb{R}^3_v\times\mathbb{R}^+}f_0|\ln f_0|dxdvdI+C(f_0,T)\right),
\end{align*}
This implies that $\mathcal{M}_{\nu,\theta}(f^n)$ is weakly compact in $L^1$ by Dunford-Pettis theorem.\newline

\noindent{\bf Acknowledgement}
This research was supported by Basic Science Research Program through the National Research Foundation of Korea(NRF) funded by the Ministry of Education(NRF-2016R1D1A1B03935955)

\bibliographystyle{amsplain}

\begin{thebibliography}{10}
\bibitem{A-L-P-P} Andries,Pierre, Le Tallec, P., Perlat, J.-P., Perthame, B.: The Gaussian-BGK model of Boltzmann equation
with small Prandtl number. Eur. J. Mech. B Fluids {\bf 19} (2000), no. 6, 813-830.
\bibitem{A-B-L-P} Andries, P., Bourgat, J.-F.,  Le Tallec, P., Perthame, B.: Numerical comparison between the Boltzmann and
ES-BGK models for rarefied gases. Comput. Methods Appl. Mech. Engrg. {\bf 191} (2002), no. 31, 3369-3390.
\bibitem{BGK} Bhatnagar, P. L., Gross, E. P. and Krook, M.: A model for collision processes in gases. Small amplitude process in charged and neutral one-component systems, Physical Revies, {\bf 94} (1954), 511-525.
\bibitem{BS}S. Brull, J. Schneider, On the ellipsoidal statistical model for polyatomic gases, Continuum
Mech. Thermodyn. 20 (2009) 489–508.
\bibitem{PY} Park, S., Yun, S.-B. : Entropy production estimates for the polyatomic ellipsoidal BGK model. Appl. Math. Lett. {\bf58} (2016), 26–33.
\bibitem{Wel} Welander, P.: On the temperature jump in a rarefied gas, Ark, Fys. {\bf 7}  (1954), 507-553.
\bibitem{Yun} Park, S., Yun, S.-B. : Cauchy problem for ellipsoidal BGK model polyatomic particles. submitted.
\bibitem{Yun2} Yun, S.-B. : Entropy production for ellipsoidal BGK model of the Boltzmann equation. Kinet. Relat. Models {\bf9} (2016), no. 3, 605–619.
\end{thebibliography}


\end{document}